\newtheorem{thm}{Theorem}[section]
\newtheorem{prop}[thm]{Proposition}
\newtheorem{lem}[thm]{Lemma}
\newtheorem*{defi}{Definition}
\title{Algebraic Stability of Oscillatory Integral Estimates: A Calculus for Uniform Estimates}
\author{John Green}
\date{}
\begin{document}
\maketitle

\begin{abstract}
Oscillatory integrals arise in many situations where it is important to obtain decay estimates which are stable under certain perturbations of the phase. Examining the structural problems underpinning these estimates leads one to consider sublevel set estimates, which behave nicely under certain algebraic operations such as composition with a polynomial. This motivates us to ask how oscillatory integral estimates behave under such transformations of the phase, and under some natural higher order convexity assumptions we obtain stable estimates under composition with polynomial phases in one dimension, and in higher dimensions in the setting of the higher dimensional van der Corput's lemma of Carbery-Christ-Wright \cite{carbery1999multidimensional}.
\end{abstract}

\begin{footnotesize}
\textbf{Key words.} Oscillatory integrals, sublevel set estimates, uniform inequality
\end{footnotesize}
\section{Introduction}
We will be interested in how oscillatory integrals of the form
$$I(\lambda)=\int e^{i\lambda f(x)}\,dx$$
decay for large values of a real ``frequency parameter" $\lambda$, where $f$ is a real-valued ``phase" function. Typically this decay will be expressed as
$$|I(\lambda)|\leq C\lambda^{-\delta}$$
for some $\delta>0$ (one may also be interested in other expressions for the decay, for instance, including a logarithmic factor, for simplicity we do not pursue this).

In addition to the decay rate, one may also be interested in uniformity of the constant $C$ within some class of phases, so that such estimates are stable under perturbations of the phase within a given set. This situation occurs, for instance, when examining the Fourier transform of some density on a hypersurface in $\mathbb{R}^n$ expressed as a graph $(x,\phi(x))$ in terms of $|\xi|$, where $\xi$ is the Fourier variable. In this case we consider the phases $(x,\phi(x))\cdot\omega$ for $\omega\in S^{n-1}$.

In connection with the related sublevel set estimates (more on this shortly), we will be interested in questions of this sort: Can we obtain oscillatory integral estimates for phases resulting from algebraic operations on phase functions for which estimates are known? And if so, how do the decay rate and constants depend on those of our original estimates?

We will consider some basic examples of this, but our main results concern the following:

\textbf{Question. }Let $f$ be a phase function such that oscillatory integral estimates of the form
$$\left|\int e^{i\lambda f(x)}\,dx\right|\leq C|\lambda|^{-\delta}$$
hold. Is it true that
$$\left|\int e^{i\lambda P(f(x))}\,dx\right|\leq C'|\lambda|^{-\delta/d}$$
for some class of normalised polynomials $P$ of degree $d$, and can $C'$ be taken to be uniform in the relevant parameters ($C$, $d$, etc), independently of $f$?

To answer this question in full generality seems difficult, as experience has shown that in proving oscillatory integral estimates, one uses particular geometric features of the phase function, whereas the resulting estimate does not. We elaborate on this point with an example that will motivate some later arguments.

In seeking an oscillatory integral estimate, one expects that as $\lambda$ increases, the differences in $f(x)$ from moving in $x$ produce much larger differences in $\lambda f(x)$, which leads to greater cancellation in the integral of $e^{i\lambda f(x)}$. A natural condition to quantify this change in $f$ is by asking that some derivative is bounded below. This provides sufficient information to derive oscillatory integral estimates, but it also provides information as to where the values of $f$ occur, something that the integral itself does not see. For instance, in one dimension, the condition $f'\geq 1$ and $f(x)=c_1$ tells us that whenever $f(y)\geq c_1+c_2$, we must have $y\geq x+c_2$.

Now, let us consider the related problem of sublevel set estimates: given a real-valued function $f$ and a constant $c$ when do we have estimates of the form $|\{x:|f(x)-c|\leq\varepsilon\}|\leq C\varepsilon^\delta$? That this should be related follows from the intuition that in order to obtain good decay in our oscillatory integral estimates, we expect that $f$ should move around a lot, that is, it should not spend too long near any given value. If we can take our sublevel set estimates with $C$ independent of $c$, these estimates are quantifying that intuition precisely.

In fact, it is well known that sublevel set estimates follow from oscillatory integral estimates - see, for instance, Carbery-Christ-Wright \cite{carbery1999multidimensional}. For completeness, we record the precise observation here.

\begin{prop}\label{oscimpsub}
Let $(X,\mu)$ be any finite measure space, $0<\delta<1$ and $f:X\rightarrow\mathbb{R}$ a measurable function such that for all real non-zero $\lambda$, we have
$$\left|\int_Xe^{i\lambda f(x)}\,d\mu(x)\right|\leq A|\lambda|^{-\delta}.$$
Then for each $c\in\mathbb{R}$, we have
$$\mu(\{x\in X:|f(x)-c|\leq\varepsilon\})\leq C_\delta A\varepsilon^\delta$$
where $C_\delta$ depends only on $\delta$.
\end{prop}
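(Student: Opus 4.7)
The plan is to dominate the indicator of $E := \{x \in X : |f(x)-c|\le\varepsilon\}$ by a fixed Schwartz majorant of $(f(x)-c)/\varepsilon$ whose Fourier transform decays rapidly, and then expand that majorant via Fourier inversion so that the oscillatory hypothesis can be fed into the inner integral pointwise in the dual variable. A convenient choice is the Gaussian $\phi(t)=e^{-t^2/2}$, for which $\mathbbm{1}_{[-1,1]}(t)\le e^{1/2}\phi(t)$ (since $\phi(t)\ge e^{-1/2}$ on $[-1,1]$), so that
$$\mu(E) \le e^{1/2}\int_X \phi\!\left(\frac{f(x)-c}{\varepsilon}\right)\,d\mu(x).$$

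Next I would write $\phi(t) = \int \hat\phi(\xi)e^{i\xi t}\,d\xi$ (up to a universal Fourier-inversion constant), substitute, and swap the order of integration. Fubini is justified because $\mu(X)<\infty$ and $\hat\phi\in L^1$, so the double integral is absolutely convergent. This gives
$$\int_X\phi\!\left(\frac{f(x)-c}{\varepsilon}\right)d\mu(x) = \int \hat\phi(\xi)\,e^{-i\xi c/\varepsilon}\left(\int_X e^{i(\xi/\varepsilon)f(x)}\,d\mu(x)\right)d\xi,$$
again up to the same constant. For each $\xi\neq 0$, the hypothesis applied with $\lambda = \xi/\varepsilon$ bounds the inner integral by $A|\xi/\varepsilon|^{-\delta} = A\varepsilon^\delta|\xi|^{-\delta}$; the single value $\xi=0$, where the hypothesis is unavailable, does not affect the Lebesgue integral in $\xi$. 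Combining the estimates produces
$$\mu(E) \le C \,A\varepsilon^\delta\int |\hat\phi(\xi)|\,|\xi|^{-\delta}\,d\xi$$
for an absolute constant $C$.

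All that remains is to observe that the integral $\int|\hat\phi(\xi)||\xi|^{-\delta}\,d\xi$ is finite and depends only on $\delta$. The Gaussian $\hat\phi$ handles decay at infinity, and integrability near the origin is where the condition $\delta<1$ enters: $|\xi|^{-\delta}$ is locally integrable in one dimension precisely for $\delta<1$, giving a $\Gamma$-function value identifiable as $C_\delta$. I expect no real obstacle beyond checking this integrability and the Fubini step; conceptually this is the standard duality between Fourier decay and local regularity, packaged in measure-theoretic form.
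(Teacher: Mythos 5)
Your argument is correct and is essentially the paper's own proof: dominate the indicator of the sublevel set by a Schwartz majorant, expand via Fourier inversion, swap integrals, feed the oscillatory hypothesis into the inner integral, and observe that $\int|\hat\phi(\xi)||\xi|^{-\delta}\,d\xi<\infty$ for $\delta<1$. The only cosmetic difference is your choice of a Gaussian majorant where the paper uses a compactly supported bump equal to $1$ on $[-1,1]$; both serve the identical purpose.
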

\begin{proof}
Denote $U_{c,\varepsilon}=\{x\in X:|f(x)-c|\leq\varepsilon\}$. Fix $\phi\in C_c^\infty(\mathbb{R})$ with $0\leq\phi\leq 1$, $\phi(x)=1$ for $x\in [-1,1]$ and $\phi(x)=0$ for $x\notin[-2,2]$. Clearly we have
$$\mu(U_{c,\varepsilon})\leq\int_X \phi((f(x)-c)/\varepsilon)\,d\mu.$$
By the Fourier inversion formula, we may write the right hand side as
$$\int_X \int_{\mathbb{R}}\hat{\phi}(s)e^{2\pi i\xi{((f(x)-c)/\varepsilon)}}\,d\xi\,d\mu=\int_{\mathbb{R}}\hat{\phi}(\xi)e^{-2\pi i\xi c/\varepsilon}\int_X e^{2\pi i (\xi/\varepsilon)f(x)}\,d\mu\,d\xi.$$
By assumption, this is bounded by
$$A\varepsilon^\delta\int_{\mathbb{R}}|\hat{\phi}(\xi)||\xi|^{-\delta}\,d\xi$$
and noting that $\hat{\phi}$ is Schwartz and $|\xi|^{-\delta}$ is integrable near $0$ for $\delta<1$ gives the result.
\end{proof}

We remark that this is not true for $\delta=1$ - for instance, one sees that on the unit cube, the oscillatory integral with phase $f(x,y)=xy$ is bounded by a multiple of $\lambda^{-1}$, but the corresponding sublevel set has $|U_{0,\varepsilon}|\sim\varepsilon\log(1/\varepsilon)$ for small $\varepsilon$.

Now, in one dimension the classical van der Corput lemma captures the behaviour we expect from our intuition. It states that for a real valued function satisfying $|f^{(N)}|\geq 1$ on an interval $I=[a,b]$, with the additional assumption that $f'$ is monotone in the case $N=1$, the oscillatory integral $\int_I e^{i\lambda f(x)}\,dx$ is bounded by $C_N|\lambda|^{-1/N}$, where $C_N$ depends only on $N$ and not on $f$ or $I$. Uniformity of the constant means that we can scale to derive bounds for functions with $|f^{(N)}|\geq\alpha>0$.

A discussion of this result can be found in Carbery-Christ-Wright \cite{carbery1999multidimensional}. However, let us comment on the proof strategy used there. The strategy taken is to first prove the corresponding sublevel set estimate, and then use that to prove the oscillatory integral estimate. One sees that most of the work is contained in the sublevel set estimate, but we require a basic fact regarding the structure of sublevel set to complete the proof, namely that it is a union of finitely many intervals, the number of which is bounded by a number depending only on $N$ - this is obvious from our hypothesis.

The essential role of this structure appears in other proofs of van der Corput's lemma, for instance the standard proof given in Stein \cite{stein1993harmonic}. Thus in general, we anticipate difficulty in finding arguments that do not make use of any structural information of the phase.

The core of our main argument comes from the ideas described above. We derive good inclusion estimates for the sublevel sets of polynomials in terms of proximity to the roots, and we get a good inclusion estimate for the sublevel sets of $f$ composed with a polynomial in terms of the sublevel sets of $f$. It follows also that if the sublevel sets of $f$ have at most $N$ components, then the sublevel sets of its composition with a polynomial of degree $d$ have at most $C=C(N,d)$ components.

\section{Main Results}
We first consider the one dimensional case. By considering phases $f(x)=x^N$ and the monomials $P(x)=x^d$, we see that the best possible statement for any classes of phases and polynomials containing multiples of these is ``if the oscillatory integral with phase $f$ decays like $|\lambda|^{-\varepsilon}$, and $P$ is of degree $d$, then the oscillatory integral with phase $P(f)$ decays like $|\lambda|^{-\varepsilon/d}$". We derive such estimates with uniformity in parameters in many situations.

In the first instance, we can take $P$ to be a monic polynomial of degree $d$ (for clarity in our statements we will in fact assume $P'$ is monic). This situation covers all polynomials by scaling; however, if we are interested in the situation where the leading coefficient degenerates to $0$, the constant in such estimates blows up. Thus we shall also introduce a suitable class of normalised polynomials, and establish a result in this complementary situation.

\begin{defi}
Let $P\in\mathbb{R}[x]$ be given as $P(x)=a_dx^d+a_{d-1}x^{d-1}+\ldots+a_1x+a_0$. Suppose that $\max_j |a_{d-j}|=1$ and that this maximum is attained by some $j$ with $j\leq d/2$. For convenience we shall call such a polynomial a semi-non-degenerating (SND) polynomial.
\end{defi}
Such polynomials were studied by Kowalski \& Wright \cite{kowalski2012elementary}. We have chosen this name for them to reflect the fact that, as the higher order coefficients degenerate to $0$, at most half of the roots go off to infinity, whereas the remaining roots stay within a bounded set.

As alluded to in the introduction, it is important that we have a stable sublevel set inclusion for polynomials in terms of proximity to the roots; for these polynomials we have such an inclusion, this is Lemma \ref{inclu}, which is due to Kowalski \& Wright. They have noted that this inclusion can fail for non-SND polynomials.

\begin{thm}\label{calcdim1M}
Let $f$ be a smooth real-valued function on an interval $I=[a,b]$ and for some $N\geq 2$, suppose that $f^{(N)}$ is single-signed. Suppose that for some $\delta<1$ and all non-zero real $\lambda$, we have
$$\left|\int_I e^{i\lambda f(x)}\,dx\right|\leq A|\lambda|^{-\delta}$$
for a fixed $A\geq 1$. Then for any polynomial $P$ of degree $d\geq 2$ such that $P'$ is monic, and all non-zero $\lambda$, we have
$$\left|\int_I e^{i\lambda P(f(x))}\,dx\right|\leq C_{d,N,\delta}A^{1/(1-\delta)}|\lambda|^{-\delta/d}.$$
Here $C_{d,N,\delta}$ depends only on $d$, $N$ and $\delta$, and not on $f$ or $I$. In addition, the same estimate holds if $P'$ is SND and $|\lambda|\geq 1$.
\end{thm}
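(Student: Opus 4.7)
My strategy is to route the problem through sublevel set estimates, which interact well with composition by polynomials, and to convert back to an oscillatory integral estimate at the end.

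Applying Proposition \ref{oscimpsub} to the hypothesis gives
\[
|\{x\in I:|f(x)-c|\leq\eta\}|\leq C_\delta A\eta^\delta\qquad(c\in\mathbb R,\ \eta>0),
\]
and for each $c$, Lemma \ref{inclu} applied to $P(\cdot)-c$ provides a root-localisation inclusion
\[
\{y\in\mathbb R:|P(y)-c|\leq\varepsilon\}\subseteq\bigcup_{j=1}^{d}\{y\in\mathbb R:|y-r_j(c)|\leq c_d\varepsilon^{1/d}\},
\]
available in both the monic and the SND cases (with different $c_d$). Pulling this inclusion back through $f$ and applying the $f$-sublevel bound on each of the $d$ pieces yields the uniform estimate
\[
|\{x\in I:|P(f(x))-c|\leq\varepsilon\}|\leq C_{d,\delta}A\varepsilon^{\delta/d}.
\]

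Next I would control the structural complexity of $g:=P\circ f$. Since $f^{(N)}$ is single-signed, every level set of $f$ has at most $N$ points, hence $g'=P'(f)f'$ has at most $(d-1)N+(N-1)$ zeros, and a similar count for $g''=P''(f)(f')^2+P'(f)f''$ lets one partition $I$ into $K=K(d,N)$ subintervals on each of which both $g$ and $g'$ are monotone.

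The main work is to promote the $g$-sublevel bound to the claimed oscillatory integral estimate on each such subinterval $J$. I would use the splitting
\[
\int_J e^{i\lambda g(x)}\,dx=\int_{J\cap\{|g-c|\leq\varepsilon\}}e^{i\lambda g}\,dx+\int_{J\cap\{|g-c|>\varepsilon\}}e^{i\lambda g}\,dx,
\]
with $c$ a suitable endpoint value of $g$ on $J$. The first term is bounded absolutely by $C_{d,\delta}A\varepsilon^{\delta/d}$; on the complement, both $g$ and $g'$ are monotone and $|g-c|>\varepsilon$, so integrating by parts against the monotone factor $g'$ gives a bound of shape $C/(|\lambda|\inf|g'|)$, and $\inf|g'|$ on this set is itself controlled below by the sublevel estimate (the slower $g'$ is, the longer the sublevel set must be). Balancing the two pieces in $\varepsilon$ produces $A^{\alpha}|\lambda|^{-\delta/d}$, and the arithmetic of the optimisation pins $\alpha=1/(1-\delta)$. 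The SND case follows the same outline once $|\lambda|\geq 1$ is imposed to absorb the extra $c_d$ factors from Lemma \ref{inclu}. The main obstacle is exactly this last step: extracting the right lower bound on $\inf|g'|$ from the sublevel estimate and performing the optimisation cleanly, while verifying that the resulting constants depend only on $d$, $N$ and $\delta$, not on $P$, $f$, or $|I|$.
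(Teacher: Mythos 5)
Your proposal takes a genuinely different route from the paper's, but it has two gaps that I think are fatal as written.

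First, the structural claim that one can partition $I$ into $K(d,N)$ subintervals on which $g'=P'(f)f'$ is monotone is false. You suggest ``a similar count'' for $g''=P''(f)(f')^2+P'(f)f''$, but this is a \emph{sum}, not a product, and there is no a priori bound on its sign changes from the hypotheses. Concretely, take $N=2$, $P(x)=x^2/2$ (so $P'$ is monic of degree $1$), and $f(x)=-1+x/M+\epsilon\phi(x)$ on $[0,1]$ with $\phi''(x)=\tfrac{1}{\epsilon M^2}\bigl(1+\tfrac{1}{2}\sin(kx)\bigr)>0$. Then $f''>0$ throughout, yet $g''=(f')^2+ff''\approx -\sin(kx)/(2M^2)$ changes sign roughly $k$ times, with $k$ arbitrary. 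So $g'$ cannot be made monotone on boundedly many pieces, and the single integration-by-parts against a monotone $g'$ is unavailable. The paper sidesteps exactly this difficulty: it never needs $g'$ monotone. Instead it requires $P'(f)$ and $f'$ to be \emph{individually} monotone (which is cheap), introduces a second threshold $r$ for $|f'|$ alongside the threshold $\varepsilon^{d-1}$ for $|P'(f)|$, disposes of the set $\{|f'|<r\}$ via Proposition \ref{derivpush}, and on $\{|f'|\geq r\}\cap\{|P'(f)|\geq\varepsilon^{d-1}\}$ uses the product rule on $\frac{1}{P'(f)f'}$ so that each piece $(1/P'(f))'$ and $(1/f')'$ is single-signed and can be integrated directly. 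This two-factor integration by parts, plus a two-parameter optimisation in $(\varepsilon,r)$, is the heart of the argument, and it is what produces the exponent $A^{1/(1-\delta)}$.

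Second, in the SND case your inclusion step is invalid. You apply Lemma \ref{inclu} to $P(\cdot)-c$, but SND is a condition on the coefficient vector that is destroyed by subtracting a constant: for $|c|$ large, $|a_0-c|$ dominates and is attained at index $j=d>d/2$, so $P-c$ (suitably normalised) is not SND, and Kowalski--Wright's inclusion is exactly the one that fails for such polynomials. The paper avoids this entirely by applying the inclusion to $P'$ (not $P$), which is a fixed polynomial independent of the level $c$, and this is why the hypothesis is stated as ``$P'$ monic or SND'' rather than a condition on $P$. Even in the monic case you'd need to note that $P'$ monic of degree $d-1$ gives $P$ leading coefficient $1/d$, so a $d$-dependent rescaling is needed before invoking Proposition \ref{monicinc} on $P-c$. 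Finally, the step where you lower-bound $\inf|g'|$ on the complement of $\{|g-c|\leq\varepsilon\}$ via the sublevel-set estimate is glossed over; it can be made to work on an interval where $g$ and $g'$ are both monotone by choosing $c$ to be the endpoint value at which $|g'|$ is smallest, but since (as above) such a decomposition of $I$ is not available, this does not rescue the argument.
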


By single-signed, we mean either $f^{(N)}\geq 0$ or $f^{(N)}\leq 0$. Note that this assumption on $f^{(N)}$ is not quantitative, and in the proof it is only used to obtain structural statements about sublevel sets in the way outlined in the previous section.

We remark that we do not in fact directly use the assumption on the decay of the oscillatory integral, rather, we use its consequence given by Proposition \ref{oscimpsub}. However, we will see that such sublevel sets estimates imply oscillatory integral estimates so long as the above single-signed assumption is satisfied and N remains bounded, giving an approximate converse to Proposition 1.1
(see the discussion after Proposition \ref{derivpush}).

Observe that in the case where the number of sign changes of some $N^{\text{th}}$ derivative is bounded by $M$, we can simply subdivide our interval into parts where $f^{(N)}$ is single-signed, and get bounds of the form $C_{d,N}(M+1)A^{1/(1-\delta)}|\lambda|^{-\delta/d}$. 

This theorem applies in particular to the setting of van der Corput's lemma, except in the case where $|f'|\geq 1$ and $f'$ is monotone. Thus, the following result, which we shall prove simultaneously with the above theorem, is almost a corollary.

\begin{prop}\label{vdCgenM}
Let $f$ be a smooth real-valued function satisfying $|f^{(N)}|\geq 1$ on an interval $I=[a,b]$ for some $N\geq 1$, where if $N=1$ we assume in addition that $f'$ is monotone. Then for any polynomial $P$ of degree $d\geq 2$ such that $P'$ is monic or an SND polynomial, we have
$$\left|\int_I e^{i\lambda P(f(x))}\,dx\right|\leq C_{d,N}|\lambda|^{-1/Nd}.$$
for $|\lambda|\geq 1$. Here $C_{d,N}$ depends only on $d$ and $N$, and not on $f$ or $I$.
\end{prop}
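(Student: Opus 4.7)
The argument splits into two cases according to $N$.

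\emph{Case $N \geq 2$.} Smoothness of $f$ together with $|f^{(N)}| \geq 1$ forces $f^{(N)}$ to be single-signed on $I$ by the intermediate value theorem. The classical van der Corput lemma (in this range no monotonicity hypothesis is required) gives
\[
\left|\int_I e^{i\lambda f(x)}\,dx\right| \leq C_N |\lambda|^{-1/N}
\]
for all nonzero real $\lambda$, with $C_N$ depending only on $N$ and independent of $I$. Since $1/N \leq 1/2 < 1$, taking $\delta = 1/N$ and $A = \max(C_N, 1)$ places us exactly in the hypotheses of Theorem \ref{calcdim1M}, whose conclusion is the desired bound with exponent $-\delta/d = -1/(Nd)$.

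\emph{Case $N = 1$.} Here classical van der Corput only gives $|\lambda|^{-1}$ decay for $f$, violating the $\delta < 1$ hypothesis of Theorem \ref{calcdim1M}, so we argue directly. Assuming without loss of generality $f' \geq 1$, the map $f : I \to f(I)$ is a smooth diffeomorphism, and the substitution $y = f(x)$ yields
\[
\int_I e^{i\lambda P(f(x))}\,dx = \int_{f(I)} e^{i\lambda P(y)} g(y)\,dy, \qquad g(y) = \frac{1}{f'(f^{-1}(y))}.
\]
Since $f'$ is monotone and $f^{-1}$ is increasing, $g$ is monotone on $f(I)$ with $0 < g \leq 1$. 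Bonnet's second mean value theorem then reduces the task to bounding $|\int_c^d e^{i\lambda P(y)}\,dy|$ uniformly in subintervals $[c,d] \subset f(I)$.

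For $P'$ monic of degree $d-1$, the constant $P^{(d)} = (d-1)! \geq 1$ lets us invoke classical van der Corput (with $N$ replaced by $d$), giving the uniform bound $C_d |\lambda|^{-1/d}$. For $P'$ SND the leading coefficient of $P$ may degenerate, so van der Corput applied directly to $P$ is insufficient; instead I will appeal to the Kowalski--Wright sublevel set inclusion (Lemma \ref{inclu}) for SND polynomials, which combined with the structural single-sign observation sketched after Proposition \ref{derivpush} converts into the polynomial oscillatory estimate $|\int_c^d e^{i\lambda P(y)}\,dy| \leq C_d |\lambda|^{-1/d}$ for $|\lambda| \geq 1$, and it is exactly this step that restricts the SND conclusion to $|\lambda| \geq 1$.

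The main obstacle is the SND subcase at $N = 1$: polynomial phases with degenerating leading coefficient cannot be handled by applying van der Corput to $P$ itself, forcing a detour through the Kowalski--Wright sublevel set inclusion; everything else is either a direct appeal to Theorem \ref{calcdim1M} or a standard diffeomorphism-plus-Bonnet reduction.
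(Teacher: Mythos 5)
Your proposal is correct, and for $N=1$ it takes a genuinely different route from the paper. The paper proves Proposition \ref{vdCgenM} simultaneously with Theorem \ref{calcdim1M} by a single unified decomposition: remove the sublevel set of $P'(f)$ (controlled by the monic/SND inclusion), split the remainder into where $|f'|<r$ and $|f'|\geq r$, and observe that in the $N=1$ case taking $r=1$ makes the small-derivative piece vanish, avoiding the otherwise problematic $\delta=1$ exponent in Proposition \ref{derivpush}. You instead treat $N\geq 2$ as a direct corollary of Theorem \ref{calcdim1M} (using that smoothness plus $|f^{(N)}|\geq 1$ forces $f^{(N)}$ single-signed, and that classical van der Corput supplies $\delta=1/N<1$), which is cleaner and is what the paper alludes to when it calls the result ``almost a corollary.'' For $N=1$ you change variables $y=f(x)$, exploit monotonicity of $f'$ to apply Bonnet's second mean value theorem, and reduce to a uniform oscillatory bound for the bare polynomial phase $P$. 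That reduction is valid (applying the second mean value theorem to real and imaginary parts introduces a harmless absolute constant), and the monic subcase follows from $P^{(d)}=(d-1)!\geq 1$ and van der Corput at order $d$.

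One small imprecision in the SND, $N=1$ subcase: the remark after Proposition \ref{derivpush} that you cite concerns sublevel sets of the \emph{phase} $f$ at all heights $c$, whereas what you actually need is control of the sublevel sets of $P'$ (the derivative of the new phase), which is what Lemma \ref{inclu} supplies. The correct route is simply the paper's one-dimensional argument specialised to $f(x)=x$, $N=1$: the Kowalski--Wright inclusion bounds $\{y:|P'(y)|\leq\varepsilon^{d-1}\}$ by a union of $O_d(1)$ intervals of length $O_d(\varepsilon)$ (requiring $\varepsilon\leq 1$, hence $|\lambda|\geq 1$), on the complement one has $|P'|\geq\varepsilon^{d-1}$ on boundedly many intervals of monotonicity, and integration by parts gives $O((\varepsilon^{d-1}|\lambda|)^{-1})$; optimising $\varepsilon=|\lambda|^{-1/d}$ yields the bound. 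So the idea you gesture at does work, and indeed essentially reproduces the paper's argument for the phase $y\mapsto P(y)$, but the specific remark you invoke is not the right tool. Your $N=1$ case is ultimately a diffeomorphism plus Bonnet reduction on top of the paper's $f=\mathrm{id}$ case, so the two arguments converge at that point; the paper's unified approach avoids the detour at the cost of tracking the extra parameter $r$ through all cases.
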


Of course, where the condition that $|\lambda|\geq 1$ appears, it can be relaxed using trivial estimates if we allow $C_{d,N}$ to depend on $|I|$. It is then clear how this result scales when we replace $f$ or $P$ by a multiple.

Once these one-dimensional results are established, it is not too difficult to establish a higher-dimensional result in the situation where some mixed derivative is bounded below and we have additional higher order convexity assumptions.

First we give the natural generality of domains we shall admit for our functions $f$. Let $n\geq 2$ and $X\subseteq\mathbb{R}^n$ be a bounded, measurable set with the property that for $j=2,3\ldots,n$ and for each $(x_1,\ldots,x_{j-1},x_{j+1},\ldots,x_n)$ the set
$$\{y\in\mathbb{R}:(x_1,\ldots,x_{j-1},y,x_{j+1},\ldots,x_n)\in X\}$$
is a union of at most $A$ intervals (in particular this is true if $X$ is a union of at most $A$ convex sets).

For a multiindex $\beta\in\mathbb{N}_0^n$ we write $\partial^\beta=\partial_{x_1}^{\beta_1}\ldots\partial_{x_n}^{\beta_n}$ and $|\beta|=\beta_1+\ldots+\beta_n$.

\begin{thm}\label{highdim}
Let $X$, $A$ be as above. Let $f$ be a smooth real-valued function in a neighbourhood of $X$ satisfying $|\partial^{\beta}f|\geq 1$ on $X$. Let $N_2>\beta_2, N_3>\beta_3,\ldots,N_n>\beta_n$ such that $\partial^{(0,0,\ldots,0,N_n)}f,\partial^{(0,0,\ldots,N_{n-1},\beta_n)}f,\ldots,\partial^{(0,N_2,\beta_3,\ldots,\beta_n)}f$ are all single-signed.

Then for any polynomial $P$ of degree $d$ such that $P'$ is either monic or an SND polynomial, we have
$$\left|\int_X e^{i\lambda P(f(x))}\,dx\right|\leq C_{D,d,n,\beta,N_2,\ldots,N_n, A}|\lambda|^{-1/|\beta|d}.$$
Here $C_{D,d,\beta,N_2,\ldots,N_n}$ depends only on $D=\text{diam}(X)$, $d$, $\beta$, the $N_j$, $A$ and $n$, and not on $f$. The dependence on $D$ is bounded as long as the diameter remains bounded.
\end{thm}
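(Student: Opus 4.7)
The plan is to establish Theorem \ref{highdim} by induction on the dimension $n$, adapting the strategy of the higher-dimensional van der Corput lemma of Carbery-Christ-Wright \cite{carbery1999multidimensional} but with Proposition \ref{vdCgenM} substituted for the classical one-dimensional van der Corput lemma as the base ingredient. The base case $n = 1$ is precisely Proposition \ref{vdCgenM}. For the inductive step, set $g = \partial_{x_2}^{\beta_2}\cdots\partial_{x_n}^{\beta_n} f$, so that $|\partial_{x_1}^{\beta_1} g| = |\partial^\beta f| \geq 1$ on $X$. For a parameter $\alpha > 0$ to be optimised, split $X = E_\alpha \sqcup (X \setminus E_\alpha)$ with $E_\alpha = \{x\in X : |g(x)|<\alpha\}$.

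On $E_\alpha$ I estimate trivially by the measure: for each fixed $(x_2,\ldots,x_n)$, the $x_1$-slice of $E_\alpha$ is a one-dimensional sublevel set of a function whose $\beta_1$-th $x_1$-derivative is bounded below by $1$, so by the one-dimensional sublevel set consequence of van der Corput (Proposition \ref{oscimpsub} applied to the $x_1$-slice of $X$, whose finite interval structure is implicit in the hypothesis), it has measure $\lesssim \alpha^{1/\beta_1}$. Hence $|E_\alpha| \lesssim D^{n-1}\alpha^{1/\beta_1}$. On $X \setminus E_\alpha$ we have $|\partial^{(0,\beta_2,\ldots,\beta_n)} f| \geq \alpha$, so for each fixed $x_1$ the restriction of $f$ to the slice in $\mathbb{R}^{n-1}$ satisfies the hypotheses of the $(n-1)$-dimensional version of the theorem, with multiindex $\beta' = (\beta_2,\ldots,\beta_n)$, lower bound $\alpha$ in place of $1$, and the single-signed chain $\partial^{(0,\ldots,0,N_n)}f,\ldots,\partial^{(0,N_2,\beta_3,\ldots,\beta_n)}f$ inherited by restriction. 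The rescaling $f\mapsto f/\alpha$, $P(y)\mapsto P(\alpha y)/\alpha^d$ (which preserves monicity of $P'$) converts the frequency to $\lambda\alpha^d$, and the inductive hypothesis delivers
\[ \left|\int_{(X\setminus E_\alpha)_{x_1}} e^{i\lambda P(f)}\,dx_2\cdots dx_n\right| \lesssim (|\lambda|\alpha^d)^{-1/(|\beta'|d)} = |\lambda|^{-1/(|\beta'|d)}\alpha^{-1/|\beta'|}. \]
Integrating trivially in $x_1$ over an interval of length $\leq D$ and balancing the two contributions by taking $\alpha = |\lambda|^{-\beta_1/(d|\beta|)}$ yields the required $|\lambda|^{-1/(d|\beta|)}$ decay.

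The main obstacle is verifying that the reduced slice $(X\setminus E_\alpha)_{x_1}$ satisfies the structural hypothesis required by the $(n-1)$-dimensional theorem, namely that its slices in each remaining coordinate direction are unions of a bounded number of intervals (bounded by some $A'$ depending on $A$ and the $N_j$). Such a slice is the intersection of $X$'s corresponding slice (at most $A$ intervals) with the super-level set of $g$ in that direction, so the task reduces to bounding the number of components of the sublevel set of $g$ in direction $x_j$, which in turn follows from single-signedness of a sufficiently high $x_j$-derivative of $g$. For $j = 2$ this is immediate: $\partial_{x_2}^{N_2-\beta_2} g = \partial^{(0,N_2,\beta_3,\ldots,\beta_n)} f$ is single-signed by the last member of the given chain. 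For $j\geq 3$ the analogous derivative of $g$ does not itself belong to the chain, but the chain is arranged precisely so that at successive induction steps (where the function being decomposed becomes $\partial_{x_k}^{\beta_k}\cdots\partial_{x_n}^{\beta_n} f$ rather than $g$) the appropriate entry supplies the required structural control, with the proliferation of interval components at each stage absorbed into a constant depending on $A$, the $N_j$ and $n$. Once this bookkeeping is carried out, the SND case of $P'$ follows by the same scheme upon invoking the $|\lambda|\geq 1$ clause of Proposition \ref{vdCgenM}.
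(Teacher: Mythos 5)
Your decomposition differs structurally from the paper's, and the difference is precisely where the argument breaks. You slice off $x_1$ and split on the full mixed derivative $g = \partial_{x_2}^{\beta_2}\cdots\partial_{x_n}^{\beta_n}f$, doing the measure estimate in one variable and the oscillatory integral estimate inductively in $n-1$ variables. The paper slices off $x_n$ and splits on the \emph{single-variable} derivative $\partial_{x_n}^{\beta_n}f$: the 1D oscillatory analysis is done in $x_n$ (where the first chain member $\partial_{x_n}^{N_n}f$ controls the structure of $\{x_n : |\partial_{x_n}^{\beta_n}f|\geq\gamma\}$), and the $(n-1)$-dimensional inductive result, used only with $P(x)=x$ together with Proposition \ref{oscimpsub}, supplies the measure bound for the bad set \emph{over the unrestricted slice} $X'$. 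Because the paper's inductive application is over a slice of $X$ rather than over a set carved out by a super-level condition, the domain hypothesis transfers automatically.

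The gap in your proposal is the structural hypothesis on $(X\setminus E_\alpha)_{x_1}$, and it is not repairable in the way you suggest. You need the slices of this set in the directions $x_3,\ldots,x_n$ (the new coordinates $2,\ldots,n-1$ of the $(n-1)$-dimensional theorem) to be unions of boundedly many intervals. Such a slice is $X_{x_j}\cap\{x_j : |g(x)|\geq\alpha\}$, so one needs the set $\{x_j:|g|<\alpha\}$ to have boundedly many components, which would follow from single-signedness of $\partial_{x_j}^{M}g = \partial_{x_2}^{\beta_2}\cdots\partial_{x_j}^{M+\beta_j}\cdots\partial_{x_n}^{\beta_n}f$ for some $M$. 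But the chain gives you $\partial_{x_j}^{N_j}\partial_{x_{j+1}}^{\beta_{j+1}}\cdots\partial_{x_n}^{\beta_n}f$ single-signed, which is \emph{not} a derivative of $g$ for $j\geq 3$ (it is missing the factors $\partial_{x_2}^{\beta_2}\cdots\partial_{x_{j-1}}^{\beta_{j-1}}$). Your appeal to ``successive induction steps'' does not help: the $(n-1)$-dimensional theorem is applied as a black box to the fixed domain $(X\setminus E_\alpha)_{x_1}$, whose slice structure must already be verified before the black box can be invoked; deferring the verification to a later stage is circular. For a concrete instance of the failure, take $n=3$, $\beta=(1,1,1)$: the set $\{x_3 : |\partial_{x_2}\partial_{x_3}f|\geq\alpha\}$ requires control of some $\partial_{x_2}\partial_{x_3}^{M}f$, but the chain only gives $\partial_{x_3}^{N_3}f$ and $\partial_{x_2}^{N_2}\partial_{x_3}f$. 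There is also a secondary issue worth noting: you invoke the $x_1$-slice structure of $X$ for the measure bound on $E_\alpha$, but the domain hypothesis only controls slices in directions $j=2,\ldots,n$, so the ``finite interval structure'' you describe as ``implicit in the hypothesis'' is not actually given in the $x_1$ direction.
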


The proof of this result essentially follows the analogous result of Carbery-Christ-Wright \cite{carbery1999multidimensional}. In that paper, they extend van der Corput's lemma to the higher dimensional setting where the natural assumption is that some mixed derivative is bounded below. They have an argument where they do not impose additional assumptions on the number of components in the slices, but do not obtain the sharp exponent, and one that does impose these assumptions but obtains the sharp exponent. We were unable to generalise the former to our setting, but we do still obtain the sharp exponent in this setting (which for a multiindex $\beta$ and functions $f$ satisfying $|\partial^\beta f|\geq 1$ is known to be a decay of $\lambda^{-1/|\beta|}$).

We were unable to obtain a complete analogue of Theorems \ref{calcdim1M}, that is to say, we are unable to replace the lower bound on the derivative in Theorem \ref{highdim} with the weaker assumption that some oscillatory integral estimate holds, but it would be interesting to find an alternative argument which handles this case.

Note that these results can be extended by limiting arguments, for instance, to distributions satisfying $|\partial^\beta f|\geq 1$ as distributions, as a consequence of the uniformity.

Let us make two more comments regarding the general themes of this paper. It is worth drawing the reader's attention to some related results by Kowalski \cite{kowalski2010comparative} concerning the asymptotics of one-dimensional oscillatory integral estimates after polynomial transformations of the phase.

An additional related question, motivated by the ease of deriving sublevel set estimates for products of functions satisfying sublevel set estimates (we discuss this in the next section), is ``Given two phase functions $f(x)$ and $g(y)$ satisfying oscillatory integral estimates, what can we say about the phase $f(x)g(y)$?"

This next result and the following discussion give an essentially complete answer to this question.

\begin{lem}\label{prodthm}
Let $f:X\rightarrow\mathbb{R}$ and $g:Y\rightarrow\mathbb{R}$ be measurable functions on finite measure spaces $(X,\mu)$ and $(Y,\nu)$. Suppose that for all non-zero real $\lambda$ we have
$$\left|\int_X e^{i\lambda f(x)}\,d\mu(x)\right|\leq A|\lambda|^{-\delta},\quad \left|\int_Y e^{i\lambda g(y)}\,d\nu(y)\right|\leq B|\lambda|^{-\delta'}$$
where $A,B,\delta,\delta'$ are positive constants with $\delta<\delta'<1$. Then for all non-zero real $\lambda$ we have
$$\left|\int_{X\times Y} e^{i\lambda f(x)g(y)}\,d\mu(x)\,d\nu(y)\right|\leq C_{\delta,\delta'}A(\nu(Y)+B)|\lambda|^{-\delta}$$
where $C_{\delta,\delta'}$ depends only on $\delta$ and $\delta'$.
\end{lem}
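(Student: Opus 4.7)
The plan is to apply Fubini to write the integral as $\int_X \bigl(\int_Y e^{i\lambda f(x)g(y)}\,d\nu(y)\bigr)\,d\mu(x)$, then for each fixed $x$ use the oscillatory estimate on $g$ with frequency parameter $\lambda f(x)$. This yields a pointwise bound
$$\Bigl|\int_Y e^{i\lambda f(x)g(y)}\,d\nu(y)\Bigr|\leq \min\bigl(\nu(Y),\,B|\lambda f(x)|^{-\delta'}\bigr),$$
which degenerates as $f(x)\to 0$; the idea is then to split the $x$-integral at a threshold and control each piece using the sublevel set estimate for $f$ supplied by Proposition \ref{oscimpsub}.

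Concretely, I would fix a threshold $\eta>0$ (to be optimised) and decompose the $x$-integral as $\int_{|f(x)|<\eta}+\int_{|f(x)|\geq\eta}$. On the small set I use the trivial inner bound $\nu(Y)$, so this piece contributes $\nu(Y)\cdot\mu(\{|f|<\eta\})\leq C_\delta A\nu(Y)\eta^\delta$ by Proposition \ref{oscimpsub} applied to $f$ with $c=0$. On the large set I use the $g$-estimate, which yields $B|\lambda|^{-\delta'}\int_{|f|\geq\eta}|f(x)|^{-\delta'}\,d\mu(x)$. I would control this last integral by a dyadic decomposition into annuli $A_k=\{2^k\eta\leq|f|<2^{k+1}\eta\}$, applying the sublevel set estimate $\mu(A_k)\leq C_\delta A(2^{k+1}\eta)^\delta$ on each. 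The geometric sum takes the form $\sum_{k\geq 0}2^{k(\delta-\delta')}$, which converges precisely because $\delta<\delta'$; this is the step that makes the hypothesis $\delta<\delta'$ essential and dictates which variable we integrate first. The large-$|f|$ contribution is thus bounded by a constant times $AB|\lambda|^{-\delta'}\eta^{\delta-\delta'}$.

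Combining, the full double integral is at most
$$C_\delta A\nu(Y)\eta^\delta+C_{\delta,\delta'}AB|\lambda|^{-\delta'}\eta^{\delta-\delta'}.$$
Choosing $\eta=|\lambda|^{-1}$ makes both terms scale like $|\lambda|^{-\delta}$ and gives the bound $C_{\delta,\delta'}A(\nu(Y)+B)|\lambda|^{-\delta}$, as claimed.

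The main subtlety is selecting the correct order of integration: the symmetric attempt (integrating in $x$ first and splitting on $|g(y)|$) produces the same dyadic sum but with exponent $\delta'-\delta>0$, which diverges, and moreover introduces a factor of $\mu(X)$ rather than $\nu(Y)$ in the small-threshold piece. Thus the asymmetry $\delta<\delta'$ in the hypothesis corresponds precisely to an asymmetric choice in the proof, with the variable carrying the weaker decay playing the role of the outer integration. Beyond this observation, the remaining steps are a standard trade-off between a sublevel set term and a tail integral.
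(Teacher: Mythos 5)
Your proof is correct, but it runs in the opposite direction from the paper's. The paper integrates in $x$ first, so the inner estimate gives $A|\lambda g(y)|^{-\delta}$, and then evaluates $\int_Y |g|^{-\delta}\,d\nu$ by a dyadic decomposition around the \emph{fixed} threshold $|g|=1$: the region $|g|>1$ yields the $\nu(Y)$ term, while the annuli $\{2^n\le |g|^{-\delta}\le 2^{n+1}\}$ give a geometric series in $2^{n(1-\delta'/\delta)}$, convergent since $\delta'>\delta$. There is no $\lambda$-dependent cutoff at all. Your version integrates in $y$ first, splits at a $\lambda$-dependent threshold $\eta=|\lambda|^{-1}$, and gets the $\nu(Y)$ term from the small-$|f|$ piece and convergence of $\sum 2^{k(\delta-\delta')}$ on the large-$|f|$ piece. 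Both arguments are complete and hinge on the same inequality $\delta<\delta'$.

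One part of your commentary is off, though. You claim the ``symmetric attempt'' (integrating in $x$ first, splitting on $|g(y)|$) necessarily diverges, and take this as evidence that the weak-decay variable must be outer. But the paper in fact integrates in $x$ first, with $g$ outer, and succeeds. What actually makes the difference is the shape of the decomposition, not the order of integration: with $g$ outer the integral $\int_Y |g|^{-\delta}\,d\nu$ is absolutely convergent near $|g|=0$ (since $\delta<\delta'$) and near $|g|=\infty$ (trivially, by $\nu(Y)<\infty$), so a fixed threshold works; with $f$ outer the integral $\int_X|f|^{-\delta'}\,d\mu$ is \emph{not} convergent near $|f|=0$, which is precisely why your $\lambda$-dependent cutoff $\eta$ is needed and why the $\min(\nu(Y),\cdot)$ bound on the inner integral plays an essential role. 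Your divergence is an artifact of pairing the $x$-first order with a $\lambda$-dependent outward decomposition; with the paper's fixed-threshold decomposition the $x$-first order is the one that works cleanly, with no optimisation over $\eta$ required.
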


\begin{proof}
We can bound the inner integral by $A|\lambda g(y)|^{-\delta}$. Also, by Proposition \ref{oscimpsub} we have for each $\varepsilon>0$, $\nu(\{y\in Y:|g(y)|\leq\varepsilon\})\leq C_{\delta'}B\varepsilon^{\delta'}$. Thus $\nu(\{y\in Y:\varepsilon^{-\delta}\leq|g(y)|^{-\delta}\})\leq C_{\delta'}B\varepsilon^{\delta'}$. We obtain
\begin{align*}
\left|\int_{X\times Y} e^{i\lambda f(x)g(y)}\,d\mu(x)\,d\nu(y)\right|&\leq \int_Y A|\lambda g(y)|^{-\delta}\,d\nu(y)\\
&=A|\lambda|^{-\delta}\int_Y |g(y)|^{-\delta}\,d\nu(y)\\
&\leq A|\lambda|^{-\delta}\left(\nu(Y)+\sum\limits_{n=0}^\infty 2^{n+1}\nu_n\right)
\end{align*}
where $\nu_n:=\nu(\{y\in Y:2^n\leq|g(y)|^{-\delta}\leq 2^{n+1}\})\leq \nu(\{y\in Y:2^n\leq|g(y)|^{-\delta}\})\leq C_{\delta'}B(2^{-n/\delta})^{\delta'}$. Hence
\begin{align*}
\left|\int_{X\times Y} e^{i\lambda f(x)g(y)}\,d\mu(x)\,d\nu(y)\right|&\leq A|\lambda|^{-\delta}\left(\nu(Y)+C_{\delta'}B\sum\limits_{n=0}^\infty 2^{n+1}2^{-n\delta'/\delta}\right)\\
&\leq C_{\delta,\delta'}A(\nu(Y)+B)|\lambda|^{-\delta}
\end{align*}
as required.
\end{proof}

That this result is essentially the best possible can be seen by simple examples, paired with Proposition \ref{oscimpsub} so that it is sufficient to consider the sublevel set at height $0$. Consider $f(x)=x^k$ and $g(y)=y^j$ defined on $[0,1]$, for $k,j\geq 2$. For $k>j$, one can see by calculations of the measure of the corresponding sublevel set that the decay exponent of $\min(\delta,\delta')$ is optimal. When $j=k$, for which $\delta=\delta'$, we see that the exponent $\min(\delta,\delta')$ is not possible, for if the oscillatory integrals with phases $(xy)^j$ had decay $|\lambda|^{-1/j}$, we would obtain the estimates $|\{(x,y)\in[0,1]^2:(xy)^j\leq\varepsilon\}|\leq\varepsilon^{1/j}$, which are false as noted in the introduction.

\textbf{Possible extensions. }We remark that the robustness of our arguments allows us to replace $P$ with many things besides polynomials, though we shall not formulate any precise results. Our arguments will make sense in the setting where the phase $f$ is composed with any function $P$ satisfying a few basic principles. First, that $P'$ is defined except at boundedly many points (for we can delete sublevel sets of $f$ from the domain around the values for which it is not defined), and that we have a ``good" inclusion for the sublevel sets of $P'$ in terms of structurally well understood sets, such as intervals. We also ask that $P'$ changes monotonicity boundedly many times.

A concrete example is given by $P(x)=|x|^s$, $s>1$. Running the main arguments with only trivial modifications yields statements analogous to the main theorems, with uniform constants and the analogous decay rates where $d$ is replaced by $s$.

\textbf{Notation. }We will often introduce constants $C$, $C'$, $C''$, etc in our proofs without explicit reference and as necessary when we wish to distinguish their values. Their dependencies will be denoted with a subscript.

\section{Proofs of the main theorems}
\subsection{Structural statements for sublevel sets}
In a general measure space $(X,\mu)$, we consider the sublevel set estimates $\mu(\{x\in X:|f(x)|\leq\varepsilon\})\leq C\varepsilon^p$. It is clear from rewriting this expression that the best possible constant $C$ for which this holds is the standard weak $L^p$ norm of $1/f$. This perspective naturally leads one to consider which inequalities relating various weak $L^p$ norms of functions have meaningful statements in terms of sublevel sets. As we are considering the reciprocal of a function, it seems best to concern ourselves with operations that respect reciprocals, such as products.

This leads us to consider the well-known analogue of H\"older's inequality for weak $L^p$ spaces (see, for instance, Grafakos \cite{grafakos2008classical}). This says that when $1/p=\sum_{j=1}^k1/p_i$, we have $\|f_1\ldots f_k\|_{L^{p,\infty}}\leq C_{p_1,\ldots,p_k}\|f_1\|_{L^{p_1,\infty}}\ldots\|f_k\|_{L^{p_k,\infty}}$. In terms of sublevel sets, this says that when we know the $f_j$ satisfy sublevel set estimates with constant $C_i$ and exponents $p_i$, then the product $f_1\ldots f_k$ satisfies sublevel set estimates with exponent $p$ given by $1/p=\sum_{j=1}^k1/p_i$, and constant controlled by $C_1\ldots C_k$.

This tells us something about the size of sublevel sets in the product, but does not tell us anything about their structure. However, following the ideas in standard proofs of H\"older's inequality and its weak $L^p$ analogue easily yield structural statements; for instance, we have:

\begin{prop}
Let $f_i:X\rightarrow\mathbb{C}$, then we have the inclusion
$$\{x\in X:|f_1\ldots f_k(x)|\leq\varepsilon\}\subseteq\cup_{i=1}^k\{x\in X:|f_i(x)|\leq(k\varepsilon/p_i)^{1/p_i}\}$$
for any $p_i$ satisfying $\sum_{i=1}^k1/p_i=1$. Furthermore, if the $f_i$ satisfy the estimates
$$\mu(\{x\in X:|f_i(x)|\leq\varepsilon\})\leq C_i\varepsilon^{\delta_i}$$
for some positive numbers $C_i$ and $\delta_i$, then we have
$$\mu(\{x\in X:|f_1\ldots f_k(x)|\leq\varepsilon\}|\leq C\varepsilon^{\delta}$$
where
$$\delta:=\left(\sum_{i=1}^k\delta_i^{-1}\right)^{-1},\quad C=\sum\limits_{i=1}^kC_i\left(\frac{k\delta}{\delta_i}\right)^\delta.$$
\end{prop}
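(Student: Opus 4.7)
The plan is to prove the two assertions in sequence: first the pointwise sublevel set inclusion, then the measure bound, which will follow by taking union bounds after choosing the $p_i$ appropriately in terms of the $\delta_i$.

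For the inclusion, I would argue by contrapositive: suppose $x$ lies in none of the sets on the right, so that $|f_i(x)| > (k\varepsilon/p_i)^{1/p_i}$ for every $i$. Multiplying these inequalities gives
$$|f_1(x)\cdots f_k(x)| > \prod_{i=1}^k (k\varepsilon/p_i)^{1/p_i} = k\varepsilon\cdot\prod_{i=1}^k p_i^{-1/p_i},$$
using that $\sum 1/p_i = 1$ to combine the factors of $k\varepsilon$. It then suffices to check that $\prod p_i^{1/p_i}\leq k$, which is weighted AM-GM applied with weights $1/p_i$ (summing to $1$) to the values $p_i$: $\prod p_i^{1/p_i} \leq \sum (1/p_i)\cdot p_i = k$. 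This gives $|f_1\cdots f_k(x)| > \varepsilon$, proving the inclusion.

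For the measure estimate, the natural choice of $p_i$ making the individual sublevel set estimates balance is $p_i := \delta_i/\delta$. With $\delta = (\sum \delta_i^{-1})^{-1}$ one immediately verifies $\sum 1/p_i = \delta\sum 1/\delta_i = 1$, so the inclusion applies. Then the $i$-th set on the right has measure bounded by
$$C_i\bigl((k\varepsilon/p_i)^{1/p_i}\bigr)^{\delta_i} = C_i\bigl(k\varepsilon\delta/\delta_i\bigr)^{\delta_i/p_i} = C_i\bigl(k\delta/\delta_i\bigr)^{\delta}\varepsilon^{\delta},$$
where the key identity is $\delta_i/p_i = \delta$. Summing the contributions gives the claimed constant $C = \sum_i C_i(k\delta/\delta_i)^\delta$, finishing the proof.

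There is no real obstacle here: the only substantive step is the weighted AM-GM verification inside the inclusion, and the rest is bookkeeping to verify that the choice $p_i = \delta_i/\delta$ both satisfies the reciprocal-sum constraint and makes the exponent from the sublevel set estimate collapse cleanly to $\delta$. The mild check worth flagging is that the inclusion is trivially valid when some $f_i$ vanishes on a positive measure set (so the product is identically small there), since such sets are absorbed into $\{|f_i|\leq(k\varepsilon/p_i)^{1/p_i}\}$ anyway.
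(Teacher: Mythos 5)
Your proof is correct. The bookkeeping for the measure estimate (choosing $p_i = \delta_i/\delta$, verifying $\sum 1/p_i = 1$, and collapsing $\delta_i/p_i = \delta$) is identical to the paper's. The inclusion step, however, is run in the dual direction. The paper works forward: it applies Young's inequality $a_1\cdots a_k \leq \sum a_i^{p_i}/p_i$ to $a_i = |f_i(x)|^{-1}$, so that $|f_1\cdots f_k(x)|\leq\varepsilon$ gives $\varepsilon^{-1}\leq\sum |f_i(x)|^{-p_i}/p_i$, and then uses pigeonhole to conclude that at least one summand is $\geq (k\varepsilon)^{-1}$, which unwinds to the stated threshold. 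You instead argue the contrapositive: multiply the assumed strict lower bounds $|f_i(x)|>(k\varepsilon/p_i)^{1/p_i}$, use $\sum 1/p_i=1$ to extract the factor $k\varepsilon$, and then control $\prod p_i^{-1/p_i}\geq 1/k$ by weighted AM-GM. Both arguments rest on the same classical inequality (Young's product--sum inequality is itself an instance of weighted AM-GM applied to $a_i^{p_i}$), so this is not a new idea so much as a different packaging: the paper's pigeonhole step makes the origin of the $(k\varepsilon/p_i)^{1/p_i}$ threshold transparent, while your multiplicative route is arguably shorter and avoids the reciprocal manipulation. Either is fine; there is no gap.
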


We have also included the corresponding estimate, which is a little weaker than the result from the weak $L^p$ H\"older inequality (by way of the inequality of arithmetic and geometric mean), simply to indicate some natural choices of $p_i$ when sublevel set estimates are known for the $f_i$.

\begin{proof}
Recall Young's inequality: For $p_i\geq 1$ satisfying $\sum_{i=1}^k1/p_i=1$ and any numbers $a_i\geq 0$, we have
$$a_1\dots a_k\leq \sum\limits_{i=1}^k\frac{a_i^{p_i}}{p_i}.$$
We use this as follows. Observe that $|f_1\ldots f_k(x)|\leq\varepsilon$ if and only if $\varepsilon^{-1}\leq\prod_{i=1}^k|f_i(x)|^{-1}$. Applying Young's inequality to the right hand side gives
$$\varepsilon^{-1}\leq \sum\limits_{i=1}^k\frac{|f_i(x)|^{-p_i}}{p_i}.$$
Then at least one of the $|f_i(x)|^{-p_i}/p_i$ is greater than or equal to $(k\varepsilon)^{-1}$, so we deduce the inclusion
$$\{x\in X:|f_1\ldots f_k(x)|\leq\varepsilon\}\subseteq\cup_{i=1}^k\{x\in\Omega:|f_i(x)|\leq(k\varepsilon/p_i)^{1/p_i}\}$$
as required. We can also bound the measure of the right hand side by the sum of the measure of each set in the union and apply the known sublevel set estimates, giving
$$\mu(\{x\in\Omega:|f(x)|\leq\varepsilon\})\leq\sum\limits_{i=1}^kC_i\left(\frac{k\varepsilon}{p_i}\right)^{\delta_i/p_i}.$$
We optimise the exponent for $\varepsilon$ by choosing the $p_i$ to be such that all the $\delta_i/p_i$ are equal to some $\delta$, subject to the constraint $\sum_{i=1}^kp_i^{-1}=1$. Substituting $1/p_i=\delta/\delta_i$, we see that we should take $\delta:=\left(\sum_{i=1}^k\delta_i^{-1}\right)^{-1}$, and accordingly set $p_i=\delta_i/\delta$.
\end{proof}

Observe that in the case where all $\delta_i$ are equal, $\delta/\delta_i=1/p_i=1/k$, so $C$ just becomes the sum of the $C_i$. The corresponding sublevel set inclusion becomes
$$\{x\in X:|f_1\ldots f_k(x)|\leq\varepsilon\}\subseteq\cup_{i=1}^k\{x\in X:|f_i(x)|\leq\varepsilon^{1/k}\}.$$

This has a useful consequence for polynomials of one variable in $\mathbb{C}$ (and by intersecting with the real line, also for $\mathbb{R}$). Suppose $P(z)=(z-z_1)\dots(z-z_d)$ is a monic polynomial. We obtain:

\begin{prop}\label{monicinc}
When $P$ is a monic polynomial of degree $d$ with roots $z_i$, we have
$$\{x:|P(x)|\leq\varepsilon^d\}\subseteq\cup_{i=1}^d\{x:|x-z_i|\leq\varepsilon\}.$$
\end{prop}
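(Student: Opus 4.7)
The plan is to recognize this as a direct corollary of the sublevel set inclusion from the preceding proposition, applied to the factorization $P(x) = \prod_{i=1}^d (x - z_i)$. Each factor $f_i(x) = x - z_i$ is viewed as one of $k=d$ functions in a product, and the statement involves no measure-theoretic content, only the inclusion itself.

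Concretely, I would apply the preceding proposition to the $d$ linear factors $f_i$, taking all $p_i$ equal to $d$ (which satisfies $\sum_{i=1}^d 1/p_i = 1$). This yields
$$\{x : |P(x)| \leq \eta\} \subseteq \bigcup_{i=1}^d \{x : |x - z_i| \leq (d\eta/d)^{1/d}\} = \bigcup_{i=1}^d \{x : |x - z_i| \leq \eta^{1/d}\}.$$
Setting $\eta = \varepsilon^d$ gives exactly the claimed inclusion.

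Alternatively, and even more transparently, one can give a one-line contrapositive argument: if $x$ lies outside every ball $\{|x - z_i| \leq \varepsilon\}$, then $|x - z_i| > \varepsilon$ for every $i$, so
$$|P(x)| = \prod_{i=1}^d |x - z_i| > \varepsilon^d,$$
whence $x$ is not in the sublevel set on the left-hand side. This is essentially the Young's inequality step from the previous proposition specialized to equal exponents, where it collapses to the trivial pigeonhole fact that if a product of $d$ nonnegative numbers exceeds $\varepsilon^d$ then at least one factor must exceed $\varepsilon$.

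There is no real obstacle here; the content is packaged in the preceding proposition, and monic-ness is exactly what ensures that the leading coefficient does not spoil the equality $|P(x)| = \prod_i |x - z_i|$. The only thing worth noting in the writeup is that the inclusion is stated in $\mathbb{C}$ (or $\mathbb{R}$ after intersecting with the real line as mentioned in the text), so no care about complex versus real roots is needed beyond interpreting $|\cdot|$ as the modulus.
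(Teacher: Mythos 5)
Your first argument is exactly the paper's route: Proposition \ref{monicinc} is presented as the immediate specialization of the preceding product-sublevel-set inclusion to $k=d$ equal exponents $p_i=d$, applied to the linear factors $x-z_i$. The contrapositive/pigeonhole observation you add is a harmless restatement of the same fact and is fine.
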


Let us make some remarks on this product sublevel set inclusion. Note that it is not translation invariant, that is, it does not work for sublevel sets not at height $0$. For instance, we cannot obtain estimates $\{x:|f_1\ldots f_k(x)-c_0|\leq\varepsilon\}$ given estimates for $\{x:|f_i(x)-c|\leq\varepsilon\}$ for each $c$. To see this, consider the example $f_1(x)=x$ and $f_2(x)=1/x$ on the interval $[1,2]$. Since both have bounded below derivative, $|\{x:|f_i(x)-c|\leq\varepsilon\}|\leq C\varepsilon$ for some $C$ independent of $i$ and $c$, however,  $|\{x:|f_1f_2(x)-1|\leq\varepsilon\}|=1$.

Noting Proposition \ref{oscimpsub}, we thus cannot expect oscillatory integral estimates to hold for $f_1(x)f_2(x)$ when they hold for $f_1$ and $f_2$, so in general there is no calculus giving oscillatory integral estimates for $P(f_1(x),f_2(x))$ where $P$ is a polynomial of two variables. This is in contrast with the more optimistic conclusions of Theorem \ref{prodthm} for when $f_1$ and $f_2$ are functions of different variables.

Nevertheless, Proposition \ref{monicinc} is sufficient for the proofs of the monic cases of Theorems \ref{calcdim1M} and \ref{highdim}. For the other results, we need an analogous structural statement for SND polynomials, which is the following result of Kowalski \& Wright \cite{kowalski2012elementary}:

\begin{lem}\label{inclu}
For an SND polynomial and $\varepsilon<1$, we have the inclusion
$$\{x:|P(x)|\leq\varepsilon^d\}\subseteq\cup_{i=1}^d\{x:|x-z_j|\leq B_d\varepsilon\}$$
where $z_j$ are the (possibly complex) roots of $P$, and $B_d$ is a constant depending only on $d$.
\end{lem}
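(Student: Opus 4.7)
The plan is to reduce to the monic Proposition \ref{monicinc} by cases on $|a_d|$. If $|a_d|\geq c_d$ for some constant $c_d>0$ to be determined, then $P/a_d$ is monic of degree $d$ and Proposition \ref{monicinc} applied at level $\eta^d=\varepsilon^d/|a_d|$ yields the inclusion with $B_d=c_d^{-1/d}$. The content of the lemma is therefore the complementary regime $|a_d|<c_d$, where the SND condition $|a_{d-j_0}|=1$ with $j_0\leq d/2$ must be exploited.

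In that regime, introduce the natural scale $\rho_*:=|a_d|^{-1/j_0}$, at which the monomial $a_d x^d$ becomes comparable to $a_{d-j_0}x^{d-j_0}$. The key structural claim, which is the main technical input from Kowalski--Wright, is that the $d$ roots of $P$ partition into a set $S$ of $d-j_0$ ``small'' roots of modulus at most a constant $R_d$, and a set $L$ of $j_0$ ``large'' roots whose moduli are all comparable, up to constants depending only on $d$, to $\rho_*$. Given this, factor $P(x)=a_d P_S(x)P_L(x)$ with $P_S,P_L$ the monic factors of degrees $d-j_0$ and $j_0$, and split the plane into $\Omega_1:=\{|x|\leq M_d\}$ for a sufficiently large $M_d$ depending only on $d$, together with its complement $\Omega_2$.

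On $\Omega_1$, every $z\in L$ satisfies $|x-z|\asymp_d\rho_*$, so $|P_L(x)|\asymp_d\rho_*^{j_0}=1/|a_d|$, and hence $|P(x)|\asymp_d|P_S(x)|$. Proposition \ref{monicinc} applied to the monic $P_S$ places $x$ within distance $\lesssim_d\varepsilon^{d/(d-j_0)}\leq\varepsilon$ of some $z\in S$, using $d/(d-j_0)\geq 1$ and $\varepsilon<1$. On $\Omega_2$ intersected with the sublevel set one has $|x|\asymp_d\rho_*$ (larger $|x|$ would make $|a_d x^d|$ too big to lie in the sublevel set), so every $z\in S$ satisfies $|x-z|\asymp_d\rho_*$ and $|P_S(x)|\asymp_d\rho_*^{d-j_0}$; this gives $|P(x)|\asymp_d\rho_*^{d-2j_0}|P_L(x)|$. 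Proposition \ref{monicinc} applied to $P_L$ then places $x$ within $\lesssim_d\varepsilon^{d/j_0}\rho_*^{(2j_0-d)/j_0}\leq\varepsilon$ of some $z\in L$, using $d-2j_0\geq 0$, $\rho_*\geq 1$, and $(d-j_0)/j_0\geq 1$.

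The main obstacle is establishing the two-scale root partition in the degenerate regime from just the SND normalization, with constants depending only on $d$. In particular, one must rule out intermediate clusters of roots at moduli between $R_d$ and $\rho_*/R_d$; this is where the hypothesis $j_0\leq d/2$ is essential, since polynomials whose roots span several widely different scales turn out to have their maximal coefficient at a low-degree position and so fail to be SND. Once the partition is in hand, the sublevel set inclusion is a perturbative two-region reduction to Proposition \ref{monicinc}.
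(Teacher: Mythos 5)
The paper does not actually prove this lemma --- it cites it verbatim as result $(25)$ of Kowalski \& Wright \cite{kowalski2012elementary} --- so there is no in-paper argument to compare against; your proposal must stand on its own. Unfortunately, the structural claim you isolate as the ``main technical input'' is false. You assert that in the regime $|a_d|<c_d$, the roots split into $d-j_0$ roots of bounded modulus together with $j_0$ roots whose moduli are \emph{all comparable}, up to constants depending only on $d$, to $\rho_*=|a_d|^{-1/j_0}$. Consider $d=4$ and $P(x)=\eta^4x^4+\eta x^3+x^2$ with $\eta$ small: this is SND with $j_0=2$ (the maximal coefficient is $|a_2|=1$, and $|a_3|=\eta<1$), so $\rho_*=\eta^{-2}$, yet its roots are $0$ (double), $\approx-\eta^{-1}$, and $\approx-\eta^{-3}$. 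The two ``large'' roots sit at moduli $\eta^{-1}$ and $\eta^{-3}$, differing by an unbounded factor, and neither is comparable to $\rho_*=\eta^{-2}$.

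This sinks the $\Omega_2$ branch of your reduction: you claim that any $x$ in the sublevel set with $|x|>M_d$ satisfies $|x|\asymp_d\rho_*$, but in the example the sublevel set has components near $-\eta^{-1}$ and $-\eta^{-3}$, at scales far from $\rho_*$, so the deduced estimate $|P_S(x)|\asymp_d\rho_*^{d-j_0}$ and the subsequent application of Proposition \ref{monicinc} to $P_L$ do not apply. The same example shows that on $\Omega_1$ one cannot say $|x-z|\asymp_d\rho_*$ for each $z\in L$; the product bound $|P_L(x)|\asymp 1/|a_d|$ may still be recoverable from $|a_{d-j_0}|=1$ via the symmetric function identity $a_d\,e_{j_0}(z_1,\dots,z_d)=\pm a_{d-j_0}$, but you do not argue this and it is not automatic. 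The weaker statement actually remarked in the paper --- that as the leading coefficients degenerate, at most $d/2$ roots escape to infinity while the rest stay in a bounded set --- carries no claim that the escaping roots cluster at a single scale, and the Kowalski--Wright proof must handle multi-scale large roots. You flag the root partition as ``the main obstacle''; the issue is sharper than that: as you have stated it, the partition is simply false, so the two-region perturbative reduction does not go through.
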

This result is $(25)$ in that paper. They also observe that this inclusion can fail when $P$ is not an SND polynomial by considering the family of polynomials $P(x)=\eta x^{k-1}(s-\eta^{-1/k})^k$, where $0<\eta<1$.

\subsection{The one-dimensional case}
Both of the situations described in Theorem \ref{calcdim1M} will be addressed simultaneously, and we will highlight the differences as they occur. We will also note the differences in adapting to the setting of van der Corput's lemma as in Proposition \ref{vdCgenM}; these slight modifications will be used in the later proof of Theorem \ref{highdim}.

In all cases, the assumption that some higher order derivative is single-signed means that we can divide $I$ into boundedly many intervals, depending on $N$, on each of which $f$ and all its derivatives up to order $N-1$ are monotone. Hence without loss of generality, we may assume that $f$ and $f'$ are monotone on $I$.

Furthermore, we may divide $\mathbb{R}$ into boundedly many intervals $J$, depending on $d$, on each of which $P'$ is monotone. Further dividing $I$ by intersecting with each $f^{-1}(J)$, we may assume without loss of generality that $P'(f)$ is monotone on $I$.

Our assumptions also allow us to assume that the sublevel sets $\{x\in I:|f(x)-c|\leq\varepsilon\}$ are unions of boundedly many intervals, depending on $N$. Now, using the structural results for the sublevel sets of monic polynomials from the preliminaries, we have
$$\{x\in I:|P'(f(x))|\leq\varepsilon^{d-1}\}\subseteq\cup_{j=1}^{d-1}\{x\in I:|f(x)-z_j|\leq\varepsilon\}$$
for all $\varepsilon>0$ where $z_j$ are the roots of $P'$ (if any $z_j$ is non-real we may clearly replace is with its real part). Similarly, using the result for SND polynomials we have
$$\{x\in I:|P'(f(x))|\leq\varepsilon^{d-1}\}\subseteq\cup_{j=1}^{d-1}\{x\in I:|f(x)-z_j|\leq B_{d-1}\varepsilon\}$$
provided $\varepsilon\leq 1$. We will estimate the oscillatory integral over the set on the right by its measure. For convenience we shall write $B_{d-1}$ when working in the monic case also.

We have by Proposition \ref{oscimpsub} that each of the sets on the right hand side is bounded by $C_\delta A(B_{d-1}\varepsilon)^\delta$, hence
$$|\cup_{j=1}^{d-1}\{x\in I:|f(x)-z_j|\leq B_{d-1}\varepsilon\}|\leq C'_{d,\delta}A\varepsilon^\delta.$$
For reference we note that in the $|f^{(N)}|\geq 1$ case, we directly have the estimate
$$|\cup_{j=1}^{d-1}\{x\in I:|f(x)-z_j|\leq B_{d-1}\varepsilon\}|\leq C'_{d,N}\varepsilon^{1/N}.$$

Since the right hand side of the above set inclusion is a union of $d-1$ sets which are unions of boundedly many intervals (depending on $N$), it is a union of boundedly many intervals (depending on $d$ and $N$), and therefore so is its complement in $I$. We now consider the contribution to the oscillatory integral over the complement of this union.

Because of the set inclusion above, on each of these intervals in the complement, we have $|P'(f(x))|\geq\varepsilon^{d-1}$ and $P'(f(x))$ is monotone, and $f'$ is monotone. Each such interval splits into at most $3$ intervals on each of which either $|f'|<r$ or $|f'|\geq r$, where $r$ is to be chosen later. Note that in the $|f'|\geq 1$ case, we will take $r\leq 1$ so that the former is empty and we ignore its contribution.

Let us summarise where we are at. Up to a constant depending only on parameters $\delta$, $d$, and $N$, the oscillatory integral is bounded by the sum of three terms. The first term is $A\varepsilon^\delta$ (or $\varepsilon^{1/N}$ in the $|f^{(N)}|\geq 1$ case). The remaining two terms are integrals over an interval - which we shall relabel as $I$ in each case - on which $|P'(f(x))|\geq\varepsilon^{d-1}$ and $P'(f(x))$ is monotone, $f'$ is monotone and either $|f'|<r$ or $|f'|\geq r$.

In the first case, we will use the following simple bound for the measure of an interval on which $|f'|\leq r$:

\begin{prop}\label{derivpush}
Let $I=[a,b]$ be an interval on which $|f'|\leq r$, and suppose that $|\{x\in I:|f(x)-c|\leq\alpha\}|\leq B\alpha^\delta$ for some $\delta<1$ and each positive $\alpha$ and real $c$. Then $|I|\leq(B/2^\delta)^{1/(1-\delta)}r^{\delta/(1-\delta)}$.
\end{prop}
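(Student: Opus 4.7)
The plan is to exploit the fact that when $|f'|\le r$ on an interval $I$, the entire image $f(I)$ is squeezed into an interval of length at most $r|I|$, which forces $I$ itself to coincide with a sublevel set of $f$ at an appropriately chosen height. We can then apply the given sublevel set hypothesis to $I$ directly and solve the resulting self-bounding inequality for $|I|$.

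More concretely, I would first note that by the mean value theorem, for any $x,y\in I$ we have $|f(x)-f(y)|\le r|x-y|\le r|I|$, so $f(I)$ is contained in an interval of length at most $r|I|$. Setting $c$ to be the midpoint of $f(I)$ (or equivalently $c=(\sup_I f+\inf_I f)/2$) then gives $|f(x)-c|\le r|I|/2$ for every $x\in I$. Therefore $I\subseteq\{x\in I:|f(x)-c|\le r|I|/2\}$, and actually the two sets are equal.

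Applying the hypothesis with $\alpha=r|I|/2$ yields
$$|I|\le B(r|I|/2)^\delta = (B/2^\delta)r^\delta|I|^\delta.$$
Since $\delta<1$ we can divide by $|I|^\delta$ (the inequality is trivial if $|I|=0$) and raise to the power $1/(1-\delta)$ to conclude $|I|\le(B/2^\delta)^{1/(1-\delta)}r^{\delta/(1-\delta)}$, which is the claimed estimate.

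There is essentially no obstacle here; the only thing to be slightly careful about is the degenerate case $|I|=0$, which holds trivially, and the fact that we are using the sublevel set hypothesis at a specific height $c$ (this is fine since the hypothesis is assumed uniformly in $c$). The role of $\delta<1$ is crucial: it is precisely what allows the self-bounding inequality $|I|\le (\text{const})\cdot|I|^\delta$ to be rearranged into a genuine upper bound on $|I|$, and it explains why the proposition would fail without it.
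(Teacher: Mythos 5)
Your proof is correct and follows essentially the same approach as the paper: both use the mean value theorem together with $|f'|\le r$ to show that $I$ is entirely contained in a sublevel set of $f$ of radius $r|I|/2$, then apply the hypothesis to obtain the self-bounding inequality $|I|\le B(r|I|/2)^\delta$ and solve for $|I|$. The only cosmetic difference is the choice of centre $c$ (you take the midpoint of the image $f(I)$, the paper takes $c=f(v)$ with $v$ the midpoint of $I$); the substance of the argument is identical.
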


We remark therefore that the estimates $|\{x\in I:|f(x)-c|\leq\alpha\}|\leq B\alpha^\delta$ imply the corresponding oscillatory integral estimates (with uniformity in $f$ so long as $B$ remains bounded) so long as the single-signed assumption is satisfied and $N$ remains bounded. This follows from the proposition and a standard integration by parts argument as in the proof of van der Corput's lemma (see, for instance, Stein \cite{stein1993harmonic}, we shall also use a modification of this below). Hence we obtain an approximate converse to Proposition \ref{oscimpsub}.

\begin{proof}
Let $v=(a+b)/2$. If there were $x\in I$ such that $|f(x)-f(v)|>|I|r/2$, then $|f(x)-f(v)|>|x-v|r$, hence by the mean value theorem there is a point in $I$ with $|f'|>r$, a contradiction. Thus $I\subseteq \{x\in I:|f(x)-f(v)|\leq |I|r/2\}$. Applying the sublevel set estimate yields $|I|\leq B(|I|r/2)^\delta$, hence
$|I|\leq (B/2^\delta)^{1/(1-\delta)}r^{\delta/(1-\delta)}$.
\end{proof}
Using the result along with Proposition \ref{oscimpsub} allows us to estimate in the single-signed $N^{\text{th}}$ derivative case by
$$(C_\delta A/2^\delta)^{1/(1-\delta)}r^{\delta/(1-\delta)}=C''_{\delta}A^{1/(1-\delta)}r^{\delta/(1-\delta)}.$$
Note that when $\delta=1/N$, this is $C''_NA^{N/(N-1)}r^{1/(N-1)}$, so in the $|f^{(N)}|\geq 1$ case this gives the expected exponent - although in the situation of Proposition \ref{vdCgenM} we have the bound $C''_Nr^{1/(N-1)}$ directly through standard sublevel set estimates.

To deal with $|f'|\geq r$, we will use a slight modification of the standard integration by parts argument from the proof of van der Corput's lemma. Write $I=[a,b]$, we have
\begin{align*}
\int_a^b e^{i\lambda P(f(x))}\,dx&=\int_a^b \frac{1}{\lambda P'(f)f'}\frac{d}{dx}e^{i\lambda P(f(x))}\,dx\\
&=\frac{1}{\lambda}\left(\left.\frac{e^{i\lambda P(f(x))}}{P'(f)f'}\right|_a^b-\int_a^b \frac{d}{dx}\left(\frac{1}{P'(f)f'}\right)e^{i\lambda P(f(x))}\,dx\right)
\end{align*}
The boundary terms are bounded in absolute value by $2(r\varepsilon^{d-1})^{-1}$. Also, we have
\begin{align*}
\left|\int_a^b \frac{d}{dx}\left(\frac{1}{P'(f)f'}\right)e^{i\lambda P(f(x))}\,dx\right|&\leq\int_a^b \left|\frac{d}{dx}\left(\frac{1}{P'(f)f'}\right)\right|\,dx\\
&\leq \frac{1}{r}\int_a^b \left|\left(\frac{1}{P'(f)}\right)'\right|\,dx\\
&\quad+\frac{1}{\varepsilon^{d-1}}\int_a^b \left|\left(\frac{1}{f'}\right)'\right|\,dx
\end{align*}
Where the second inequality follows from the product rule and our bounds on $f'$ and $P'(f)$. Since each of the derivatives appearing in the integrals is of a monotone function (since $f'$ and $P'(f)$ are single-signed and monotone, so is their reciprocal), they are single-signed, and so we can pull the absolute values out of the integral and calculate. Using the bounds on $f'$ and $P'(f)$, we get that this integral is bounded by $4(r\varepsilon^{d-1})^{-1}$, so in total we have a bound of $6(r|\lambda|\varepsilon^{d-1})^{-1}$.

Altogether then, the oscillatory integral is bounded (up to a uniform constant) by
$$A\varepsilon^\delta+A^{1/(1-\delta)}r^{\delta/(1-\delta)}+(r|\lambda|\varepsilon^{d-1})^{-1}.$$
In the setting of van der Corput's lemma, we have
$$\varepsilon^{1/N}+r^{1/(N-1)}+(r|\lambda|\varepsilon^{d-1})^{-1}$$
where the middle term is omitted if $N=1$.

To conclude, set $\varepsilon=|\lambda|^{-1/d}$ and $r=|\lambda|^{-(1-\delta)/d}$ to obtain the desired result, where we impose $|\lambda|\geq 1$ in the SND case. Note $A\geq 1$ implies $1+A+A^{1/(1-\delta)}\leq 3A^{1/(1-\delta)}$.

In the setting of van der Corput's lemma, we set $\varepsilon=|\lambda|^{-1/d}$ and $r=|\lambda|^{-(N-1)/Nd}$ to obtain the desired result, where we impose $|\lambda|\geq 1$ in the SND case (so that $\varepsilon\leq 1$ in order to use Lemma \ref{inclu}) and in the $N=1$ case (so that $r=1$ and we ignore the $r^{1/(N-1)})$ term).

\subsection{The higher-dimensional case}
Here we give the proof of Theorem \ref{highdim}. We will run essentially the same argument multiple times, so we begin by outlining the order of the logical steps, then present the argument, with the minor differences indicated.

We first prove the result for $P(x)=x$ as in Carbery-Christ-Wright \cite{carbery1999multidimensional} by induction on the dimension. The proof for the base $n=2$ case proceeds exactly as the inductive step, so we shall make no distinction. The same proof used for the inductive step is then also used with the one-dimensional analysis of the previous section to prove the full result.

We may assume $|\lambda|\geq 1$, since trivial estimates yield the desired bound for $|\lambda|<1$. Write $x=(x',x_n)$, $\beta=(\beta',\beta_n)$. We will use $X'=X'(x_n)$ and $X_n=X_n(x')$ to denote the slices $\{x':x\in X\}$ for fixed $x_n$ and $\{x_n:x\in X\}$ for fixed $x'$ respectively.

For a parameter $\gamma>0$, split the integral into two parts, one over the set where $|\partial_{x_n}^{\beta_n}f|\geq\gamma$ and one over its complement. For the former, for fixed $x'$ consider the integral in $x_n$. There are at most $A$ intervals in this slice of $X$, in each of these intervals the set $\{x_n:|\partial_{x_n}^{\beta_n}f|\geq\gamma\}$ is a union of a number of intervals bounded by a number depending only on $N_n$ and $\beta_n$. In the case $\beta_n=1$, we may subdivide into a number of intervals depending on $N_n$ to ensure that on each, $\partial_{x_n}^{\beta_n}f$ is monotone.

In the $P(x)=x$ case we obtain
\begin{align*}
\left|\int_{X\cap\{x:|\partial_{x_n}^{\beta_n}f|\geq\gamma\}}e^{i\lambda P(f(x))}\,dx\right|&\leq\int\left|\int_{X_n\cap\{x_n:|\partial_{x_n}^{\beta_n}f|\geq\gamma\}}e^{i\lambda P(f(x',x_n))}\,dx_n\right|\,dx'\\
&\leq \int C_{D,d,\beta_n,N_n,A}K(\lambda,\gamma)\,dx'\\
&\leq C_nD^{n-1}C_{D,d,\beta_n,N_n,A}K(\lambda,\gamma)\\
&=C'_{D,d,n,\beta_n,N_n,A}K(\lambda,\gamma)
\end{align*}
where the final inequality comes from bounding the measure of the projection of $X$ onto the first $n-1$ coordinates and $K(\lambda,\gamma)$ is $|\lambda\gamma|^{-1/\beta_n}$ (by the usual van der Corput estimate).

In the general case, the same argument holds with a different value of $K(\lambda,\gamma)$ via an identical argument as in the proofs of the one-dimensional results. The value is as follows. 

With parameters $\varepsilon$ and $r$ to be chosen later, when $\beta_n>1$ we may take $K(\lambda,\gamma)=\gamma^{-1/\beta_n}\varepsilon^{1/\beta_n}+\gamma^{-1/(\beta_n-1)}r^{1/(\beta_n-1)}+(r|\lambda|\varepsilon^{d-1})^{-1}$, with $0<\varepsilon\leq 1$ in the SND case (in order to apply Lemma \ref{inclu}). If $\beta_n=1$, we take $K(\lambda,\gamma)=\gamma^{-1}\varepsilon+(\gamma|\lambda|\varepsilon^{d-1})^{-1}$, where $0<\varepsilon\leq 1$ in the SND case and we have taken $r=\gamma$ in the argument of the one dimensional case so that the set where $|\partial_{x_n}^{\beta_n}f|<\gamma$ is empty.

To bound the integral over the set where $|\partial_{x_n}^{\beta_n}f|<\gamma$, we simply bound by the measure, that is, by
$$\int\int_{X'\cap\{x':|\partial_{x_n}^{\beta_n}f(x',x_n)|<\gamma\}}\,dx'\,dx_n.$$
Provided $|\beta'|>1$, the $n-1$ case of the theorem applied to $\partial_{x_n}^{\beta_n}f$ with derivative $\partial^{\beta'}$ and polynomial $P(x)=x$, noting the diameter of $X'$ is bounded by the diameter of $X$, paired with Proposition \ref{oscimpsub}, gives estimates of $C_{D,d,n,\beta',N_2,\ldots,N_{n-1}, A}\gamma^{1/|\beta'|}$.

If $|\beta'|=1$, the same remains true. Then for some $j$ we have $|\partial_{x_j}\partial_{x_n}^{\beta_n}f|\geq 1$. We split the $x'$ integral over $x''=(x_1,\ldots,x_{j-1},x_{j+1},\ldots,x_{n-1})$ and $x_j$, so by first performing the integral in the $x_j$, we are to bound the sublevel set of a function having first derivative bounded below by $1$ in a union of at most $A$ intervals, which is at most $2A\gamma$, and then integrating in the other $n-2$ variables results in multiplying by at most $C_nD^{n-2}$, hence again we obtain the desired result.

Now integrating in $x_n$ contributes a factor of at most $D$, so the combined contribution over both sets is
$$\left|\int_X e^{i\lambda P(f(x))}\,dx\right|\leq C_{D,d,n,\beta,N_2,\ldots,N_n, A}(K(\lambda,\gamma)+\gamma^{1/|\beta'|}).$$
In the $P(x)=x$ case, we have $K(\lambda,\gamma)+\gamma^{1/|\beta'|}=|\lambda\gamma|^{-1/\beta_n}+\gamma^{1/|\beta'|}$, and setting $\gamma=|\lambda|^{-|\beta'|/|\beta|}$ gives $|\lambda|^{-1/|\beta|}$ as required.

For the general polynomial case, first if $\beta_n>1$, set $\gamma=|\lambda|^{-|\beta'|/d|\beta|}$, $\varepsilon=|\lambda|^{-1/d}$ (which is at most $1$ for $|\lambda|\geq 1$) and $r=|\lambda|^{-1/d+1/d|\beta|}$. Then 
\begin{align*}
K(\lambda,\gamma)+\gamma^{1/|\beta'|}&=(\varepsilon/\gamma)^{1/\beta_n}+(r/\gamma)^{1/(\beta_n-1)}+(r|\lambda|\varepsilon^{d-1})^{-1}+|\lambda|^{-1/d|\beta|}\\
&=(|\lambda|^{-\beta_n/d|\beta|})^{1/\beta_n}+(|\lambda|^{-(\beta_n-1)/d|\beta|})^{1/(\beta_n-1)}\\
&\quad+(r|\lambda|\varepsilon^{d-1})^{-1}+|\lambda|^{-1/d|\beta|}\\
&=3|\lambda|^{-1/d|\beta|}+(r|\lambda|\varepsilon^{d-1})^{-1}=4|\lambda|^{-1/d|\beta|}.
\end{align*}

If $\beta_n=1$, so $|\beta'|=|\beta|-1$, we set $\gamma=|\lambda|^{-(|\beta|-1)/d|\beta|}$, $\varepsilon=|\lambda|^{-1/d}$ and obtain
\begin{align*}
K(\lambda,\gamma)+\gamma^{1/|\beta'|}&=(\varepsilon/\gamma)+(\gamma|\lambda|\varepsilon^{d-1})^{-1}+\gamma^{1/|\beta'|}\\
&=2|\lambda|^{-1/d|\beta|}+(\gamma|\lambda|\varepsilon^{d-1})^{-1}=3|\lambda|^{-1/d|\beta|}
\end{align*}
as desired. This completes the proof of Theorem \ref{highdim}.

\textit{Acknowledgements.} The author is supported by a UK EPSRC scholarship at the Maxwell Institute Graduate School. The author would like to thank Prof. James Wright for many helpful discussions and comments, in particular the technique for estimating oscillatory integrals when the derivative is a product of two bounded below monotone functions is due to him.

\bibliographystyle{abbrv}
\bibliography{Bibliography1}

John Green,\\ Maxwell Institute of Mathematical Sciences and the School of Mathematics,\\ University of Edinburgh,\\ JCMB, The King’s Buildings,\\ Peter Guthrie Tait Road,\\ Edinburgh, EH9 3FD,\\ Scotland\\ Email: \texttt{J.D.Green@sms.ed.ac.uk}
\end{document}